\newtheorem{defin}{Definition}[section]
\newtheorem{theorem}[defin]{Theorem}
\newtheorem{lemma}[defin]{Lemma}
\DeclareMathOperator{\vol}{vol}
\DeclareMathOperator{\GL}{GL}
\title{New dense superball packings in three dimensions}
\author{Maria Dostert} 
\address{M.~Dostert, EPFL, SB TN, Station 8, CH-1015 Lausanne, Switzerland}
\email{maria.dostert@epfl.ch}
\author{Frank Vallentin} 
\address{F.~Vallentin, Mathematisches Institut, Universit\"at zu
  K\"oln, Weyertal~86--90, 50931 K\"oln, Germany}
\email{frank.vallentin@uni-koeln.de}
\keywords{lattice packings, superballs, interval arithmetic}
\subjclass{11H31, 52C17, 90C30}
\date{June 27, 2018}
\begin{document}

\begin{abstract} 
  In this paper we construct a new family of lattice packings for
  superballs in three dimensions (unit balls for the $l^p_3$ norm)
  with $p \in (1, 1.58]$. We conjecture that the family also exists
  for $p \in (1.58, \log_2 3 = 1.5849625\ldots]$. Like in the densest
  lattice packing of regular octahedra, each superball in our family
  of lattice packings has $14$ neighbors.
\end{abstract}

\maketitle

\markboth{M.~Dostert and F.~Vallentin}{New dense superball packings in three dimensions}

\section{Introduction}

Finding dense packings of spheres in $n$-dimensional Euclidean space
is one of the most central problems in discrete geometry. In this
paper, we consider lattice packings of superballs in dimension
three. \textit{Superballs} are unit balls for the $\ell^p_n$ norm:
\[ 
\mathcal{B}^p_n = \{ x \in \mathbb{R}^n : \|x\|_p \leq 1\} \;
\text{where} \; \|x\|_p = \left(\sum_{i = 1}^n |x_i|^p\right)^{1/p}.
\]

We distinguish between three different kinds of packings which are
increasingly restrictive: congruent packings, translative packings,
and lattice packings. Let $\mathcal{K}$ be a convex body in
$\mathbb{R}^n$. By $\mathcal{K}^\circ$ we denote its topological
interior. We consider the \textit{special orthogonal group}
\[
\mathrm{SO}(n) = \{ A \in \mathbb{R}^{n \times n} : AA^{\sf T} = I_n,\; \det A = 1\},
\]
and $I_n$ denotes the identity matrix.

A \textit{congruent packing} of $\mathcal{K}$ is defined as
\[
\mathcal{P} = \bigcup_{i \in \mathbb{N}} \left(x_i + A_i
  \mathcal{K}\right) \text{ with } (x_i, A_i) \in \mathbb{R}^n \times
\mathrm{SO}(n),
\]
where
\[
(x_i + A_i \mathcal{K}^\circ) \cap (x_j + A_j \mathcal{K}^\circ) =
\emptyset \; \text{ whenever } \; i \neq j.
\]
\textit{Translative packings} are congruent packings without
rotations, i.e.\ all matrices $A_i$ are equal to $I_n$. Moreover, if
the set $\{x_i : i \in \mathbb{N}\}$ forms a lattice (a discrete
subgroup of $\mathbb{R}^n$), then we call this packing a
\textit{lattice packing}.

The \textit{(upper) density} of a congruent packing $\mathcal{P}$ is
defined as
\[
\delta(\mathcal{P}) = \limsup_{r \rightarrow \infty} \sup_{c \in
  \mathbb{R}^n} \frac{\vol(B(c,r) \cap \mathcal{P})}{\vol B(c,r)},
\]
where $B(c,r) = \{x \in \mathbb{R}^n : \|x-c\|_2 \leq r\}$ is the
Euclidean ball of radius $r$ with center $c$.

Jiao, Stillinger, and Torquato \cite{Jiao2009a, Jiao2011a} determined
four families of dense superball packings via computer
simulations. They divided the possible values of $p$ into four
regimes:
\[
p \in [1, \infty) = [1, \log_2 3 = 1.5849\ldots] \cup [\log_2 3, 2]
\cup [2, 2.3018\ldots] \cup [2.3018\ldots, \infty).
\]
For every regime they found a family of lattices which defines dense
lattice packings for $\mathcal{B}^p_3$. The family for the first
regime $[1,\log_2 3]$ they call the $\mathbb{O}_1$ lattices, and
$\mathbb{O}_0$, $\mathbb{C}_0$, $\mathbb{C}_1$ correspondingly for the
second, third, and fourth regime. In their approach, they even allowed
for congruent packings, but it turned out that the densest congruent
packings they obtained were lattice packings. They conjectured that
they found the densest congruent packings of $\mathcal{B}^p_3$ for all
values of $p$.

However, Ni, Gantapara, de Graaf, van Roij, and Dijkstra
\cite{Ni2012a} found, also via computer simulations, denser lattice
packings for superballs in the second regime. Also for $p = 1.4$,
which falls into the first regime, they give one lattice for which
they claim (but see Section~\ref{sec:Findings})) that it is denser
than the corresponding $\mathbb{O}_1$ lattice. They report (see page
8830 in \cite{Ni2012a}) that due to numerical instabilities they could
not investigate values $p < 1.4$.

In this paper, we use a method of Minkowski \cite{Minkowski1904a} to
determine locally densest lattice packings of
$\mathcal{B}^p_3$. Minkowski applied this method to determine the densest
lattice packing of regular octahedra $\mathcal{B}^1_3$. Starting from
Minkowski's lattice we found a family of new lattice packings for the
first regime which is denser than the $\mathbb{O}_1$ lattices, see
Table~\ref{table:FirstRegime}.  Here, each superball in our family of
lattice packings has $14$ neighbors, like in the densest lattice
packing of regular octahedra. We also found new lattice packings in
the second regime which are denser than the $\mathbb{O}_0$ lattices,
see Table~\ref{table:SecondRegime}. For the third and fourth regime
the densest lattices we found are the $\mathbb{C}_0$ lattices,
respectively the $\mathbb{C}_1$ lattices.

{\small
\begin{table}[htb]
\begin{center}
\renewcommand{\arraystretch}{1.2}
\begin{tabular}{ccc}
$p$ & \textsl{packing density of $\mathbb{O}_1$ lattices} & \textsl{packing density of new family} \\[0.5ex]
$1$  & $18/19 = 0.94736\ldots$ & $0.94736\ldots$ \\
$1.1$  & $0.90461\ldots$ & $0.90913\ldots$ \\
$1.2$  & $0.87121\ldots$ & $0.87861\ldots$ \\
$1.3$  & $0.84516\ldots$ & $0.85375\ldots$ \\
$1.4$  & $0.82497\ldots$ & $0.83284\ldots$ \\
$1.5$  & $0.80948\ldots$ & $0.81395\ldots$ \\
$\log_2 3$ & $0.79594\ldots$ & $0.79594\ldots$ \\
\end{tabular}
\\[0.3cm]
\end{center}
\caption{Packing density of $\mathbb{O}_1$ lattices (see
  \cite{Jiao2009a, Jiao2011a}) and of our new family (see
  Section~\ref{sec:Findings} and Section~\ref{sec:Family}). When $p = 1$ we obtained the lattice
  which determines the densest lattice packing of regular octahedra
  (see \cite{Minkowski1904a}) and when $p = \log_2 3$ we obtain the
  body centered cubic lattice.}
\label{table:FirstRegime}
\end{table}
}

{\small
\begin{table}[htb]
\begin{center}
\renewcommand{\arraystretch}{1.2}
\begin{tabular}{ccc}
$p$ & \textsl{packing density of $\mathbb{O}_0$ lattices} & \textsl{packing density of new lattices} \\[0.5ex]
$\log_2 3$ & $0.79594\ldots$ & $0.79594\ldots$ \\
$1.6$ & $0.79084\ldots$  & $0.79084\ldots$ \\
$1.7$ & $0.76567\ldots$  & $0.76610\ldots$ \\
$1.8$ & $0.75126\ldots$  & $0.75303\ldots$ \\
$1.9$ & $0.74364\ldots$  & $0.74550\ldots$ \\
$2$ & $\pi/\sqrt{18} =  0.74048\ldots$ & $0.74048\ldots$ \\
\end{tabular}
\\[0.3cm]
\end{center}
\caption{Packing density of $\mathbb{O}_0$ lattices (see
  \cite{Jiao2009a, Jiao2011a}) and of the new lattices (see
  \cite{Ni2012a} and see Section~\ref{sec:Findings}). When $p = 2$ we
  obtain the face centered cubic lattice.}
\label{table:SecondRegime}
\end{table}
}

The structure of the paper is as follows: We first explain Minkowski's
method and how we approached it computationally in
Section~\ref{sec:Determination}. Section~\ref{sec:Findings} contains a
report on some of our findings, in particular new lattice packings in
the first and second regime.  Section~\ref{sec:Family} provides a
computer-assisted proof proving that the family of new dense lattice
packings exists for $p \in [1,1.58]$. We end with
Section~\ref{sec:Conjectures} by posing some conjectures and open
problems.

\section{Determination of lattice packings by local optimization}
\label{sec:Determination}

Let $\Lambda$ be a lattice so that 
\[
\mathcal{P} = \bigcup_{x \in \Lambda} (x + \mathcal{K})
\]
forms a lattice packing of the convex body $\mathcal{K}$, i.e.\ for
all distinct lattice vectors $x, y \in \Lambda$ we have
\begin{equation}
\label{eq:packingcondition}
(x + \mathcal{K}^\circ) \cap (y + \mathcal{K}^\circ) = \emptyset.
\end{equation}
Then, $\Lambda$ is called a \textit{packing lattice}. From
\eqref{eq:packingcondition} it immediately follows that a packing
lattice is characterized by
\[
\Lambda \cap (\mathcal{K} - \mathcal{K})^\circ = \emptyset, \; \text{ where } 
\mathcal{K} - \mathcal{K} = \{x - y : x, y \in \mathcal{K}\}
\]
is the \textit{difference body} of $\mathcal{K}$. In other words,
$\Lambda$ is a packing lattice for $\mathcal{K}$ if and only if the
condition
\[
\|x\|_{\mathcal{K} - \mathcal{K}} \geq 1 \; \text{ for all } x \in \Lambda \setminus \{0\}
\]
holds; here
\[
\|x\|_{\mathcal{L}} = \inf\left\{ \lambda : \frac{1}{\lambda} x \in \mathcal{L}\right\}
\]
is the \textit{Minkowski norm} of $x$ defined by a centrally symmetric
convex body $\mathcal{L}$.

The \textit{general linear group} of degree $n$ over a ring $R$ is defined as
\[
\GL_n(R) = \{ B \in R^{n \times n} : \exists A \in R^{n \times n}: AB = BA = I_n\}.
\]
A matrix $B \in \GL_n(\mathbb{R})$ with linearly independent column
vectors $b_1, \ldots, b_n$ specifies a lattice $\Lambda$ by taking all
integral linear combinations of $b_1, \ldots, b_n$:
\[
\Lambda = \mathbb{Z} b_1 + \cdots + \mathbb{Z} b_n = B\mathbb{Z}^n.
\]
Two matrices $B, B' \in \GL_n(\mathbb{R})$ determine the same lattice
if and only if there is a matrix $U \in \GL_n(\mathbb{Z})$ such that
$BU = B'$. Matrix $B$ also gives a fundamental domain $F$ of $\Lambda$ by
\[
F = \left\{ \sum_{i=1}^n \alpha_i b_i : \alpha_i \in [0,1], i = 1, \ldots, n\right\}
\]
Then, the volume of a fundamental domain of $\Lambda$ is
$\vol(\mathbb{R}^n / \Lambda) = \vol F = |\det(B)|$. The density of a
lattice packing
$\mathcal{P} = \bigcup_{x \in \Lambda} (x + \mathcal{K})$ is
\[
\delta(\mathcal{P}) = \frac{\vol \mathcal{K}}{\vol(\mathbb{R}^n /
  \Lambda)} =\frac{\vol \mathcal{K}}{|\det B|}.
\]
So one can state the problem of finding a densest lattice packing of a
convex body $\mathcal{K}$ as the following minimization problem:
\[
\begin{split}
\text{minimize } & |\det B| \\
\text{so that } & B \in \GL_3(\mathbb{R}) / \GL_3(\mathbb{Z})\\
& \|Bu\|_{\mathcal{K} - \mathcal{K}} \geq 1 \text{ for all } u \in \mathbb{Z}^n\setminus\{0\}.
\end{split}
\]
It follows from Mahler's selection theorem that the minimum is
attained, see \cite[Theorem 30.1]{Gruber2007a}.

Two distinct translates $x + \mathcal{K}$ and $y + \mathcal{K}$ are
called \textit{neighbors} in a lattice packing $\mathcal{P}$ if they
have a nonempty intersection. The number of neighbors coincides for
all translates. How many neighbors can $\mathcal{K}$ have at most?
Minkowski showed that $\mathcal{K}$ has at most $3^n-1$ neighbors and
if $\mathcal{K}$ is strictly convex, then the number of neighbors is
bounded by $2^{n+1}-2$, see \cite[Theorem
30.2]{Gruber2007a}. Swinnerton-Dyer proved that when a lattice $\Lambda$
achieves a density which is locally maximal, then $\mathcal{K}$ has at
least $n(n+1)$ neighbors, see \cite[Theorem 30.3]{Gruber2007a}.

Let $B \in \GL_3(\mathbb{R})$ be a matrix defining a locally densest
lattice packing $\Lambda = B\mathbb{Z}^3$ of $\mathcal{K}$. Minkowski
\cite{Minkowski1904a} (see also \cite[\S 32]{Gruber1987a},
\cite{Betke2000a}, \cite{Haus1999a}) showed that after performing a
suitable $\GL_n(\mathbb{Z})$-transformation to $B$ we can reduce to
the following three cases.

\noindent \textbf{Case (I):} $Bu + \mathcal{K}$ with $u \in \mathcal{U}^1$ are neighbors of $\mathcal{K}$, where
\[
\mathcal{U}^1 = \{\pm (1,0,0), \pm (0,1,0), \pm (0,0,1), \pm (1,-1,0), \pm (0, 1,-1), \pm (1,0,-1)\},
\]
but $\pm (-1,1,1), \pm (1,-1,1), \pm (1,1,-1)$ are not.

\noindent \textbf{Case (II):} $Bu + \mathcal{K}$, with $u \in \mathcal{U}^2$, are neighbors of $\mathcal{K}$, where
\[
\mathcal{U}^2 = \{\pm (1,0,0), \pm (0,1,0), \pm (0,0,1), \pm (1,1,0), \pm (0, 1,1), \pm (1,0,1)\},
\]
but $\pm (1,1,1)$ are not.

\noindent \textbf{Case (III):} $Bu + \mathcal{K}$, with $u \in \mathcal{U}^3$, are neighbors of $\mathcal{K}$, where
\[
\mathcal{U}^3 = \mathcal{U}^2 \cup \{\pm (1,1,1)\}.
\]

\begin{figure}[htb]
  \centering
    \includegraphics[scale=0.12]{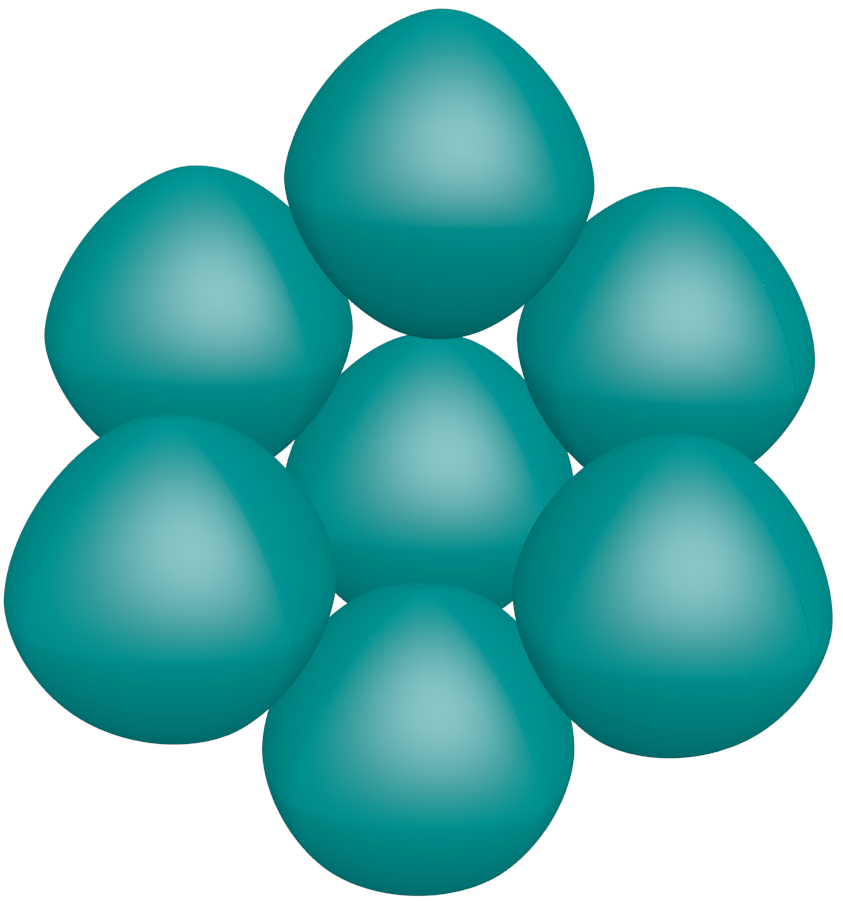}  
    \hspace*{15mm}
    \includegraphics[scale=0.13]{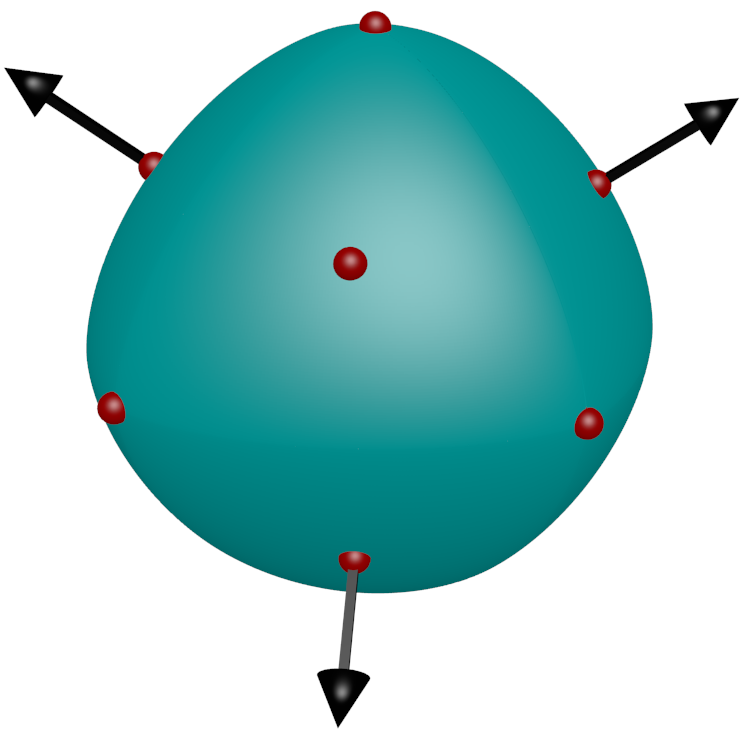}
  \put(-57,27){\small \textcolor{red}{$\bf{b_3}$}}
  \put(-115,38){\small \textcolor{red}{$\bf{b_2+b_3}$}}
   \put(-23,78){\small \textcolor{red}{$\bf{b_1}$}}
   \put(-14,33){\small \textcolor{red}{$\bf{b_1 + b_3}$}}
  \put(-68,47){\small \textcolor{red}{$\bf{b_1 + b_2 + b_3}$}}
  \put(-65,97){\small \textcolor{red}{$\bf{b_1 + b_2}$}}
  \put(-83,80){\small \textcolor{red}{$\bf{b_2}$}}
  \caption{On the left: A part of a lattice packing for $B^{\log_2 3}_3$ satisfying Case (III). On the right: Seven of fourteen neighbors of the packing. Contact points are labeled in red. }
  \label{fig:Neighbors}
\end{figure}

From now on, we are interested in lattice packings of superballs in three
dimensions. We perform a rescaling by setting
$\mathcal{K} = \frac{1}{2}\mathcal{B}^p_3$ so that
$\|x\|_{\frac{1}{2}\mathcal{B}^p_3 - \frac{1}{2} \mathcal{B}^p_3} =
\|x\|_p$.
Note that scaling $\mathcal{K}$ only scales the packing lattices but
does not effect the packing density.

For each of the three cases, Case (I), (II), and (III), we can
numerically find critical points of the following finite nonlinear
optimization problem
\begin{equation}
\label{eq:nonlinear}
\begin{split}
\text{minimize } & \det B \\
\text{so that } & B \in \GL_3(\mathbb{R})\\
& \det B > 0\\
& \|Bu\|_p = 1 \text{ for all } u \in \mathcal{U}^i,
\end{split}
\end{equation}
for $i = 1, 2, 3$, in order to identify candidates for locally densest
lattice packings.

After finding a matrix $B$ which is a feasible solution of this
optimization problem, we have to check whether $B$ indeed defines a
packing lattice for $\mathcal{B}^p_3$. For this it suffices to verify
that $\|Bu\|_p \geq 1$ holds for only finitely many vectors
$u \in \mathbb{Z}^n \setminus \{0\}$ as the following lemma shows; see
also Dieter~\cite{Dieter1975a}.

\begin{lemma}
  Suppose $p,q \in [1,\infty)$ satisfy the relation
  $\frac{1}{p} + \frac{1}{q} = 1$ and suppose that a matrix
  $B \in \GL_n(\mathbb{R})$ is given. If
  $u = (u_1, \ldots, u_n) \in \mathbb{Z}^n$ is such that
  $\|Bu\|_p \leq \mu$ for some nonnegative number $\mu$, then
\[
|u_i| \leq \|B^{-1}_i\|_q \; \mu \quad \text{ for all } i = 1, \ldots, n,
\]
where $B^{-1}_i$ is the $i$-th row vector of $B^{-1}$.
\end{lemma}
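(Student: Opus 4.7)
The plan is to express each coordinate $u_i$ as a linear functional evaluated at the vector $Bu$, and then bound this functional using Hölder's inequality.

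More concretely, I would first set $v = Bu$ and write $u = B^{-1}v$. The $i$-th coordinate of $u$ is then the inner product of the $i$-th row of $B^{-1}$ with $v$, namely
\[
u_i = B^{-1}_i \, v = \sum_{j=1}^n (B^{-1})_{ij}\, v_j.
\]
Applying Hölder's inequality with the conjugate exponents $p$ and $q$ gives
\[
|u_i| \;\leq\; \|B^{-1}_i\|_q \, \|v\|_p \;=\; \|B^{-1}_i\|_q \, \|Bu\|_p \;\leq\; \|B^{-1}_i\|_q \, \mu,
\]
which is the claimed bound.

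There is essentially no obstacle: once one recognizes that $u_i$ is the dual pairing of the $i$-th row of $B^{-1}$ with $Bu$, the lemma reduces to a one-line application of Hölder. The only mild subtlety is the boundary case $p = 1$ (so $q = \infty$), but Hölder still applies with $\|\cdot\|_\infty$ in place of $\|\cdot\|_q$, and the statement of the lemma covers this by the convention that $q$ is the conjugate exponent. No hypothesis beyond invertibility of $B$ and $u \in \mathbb{Z}^n$ is needed; the integrality of $u$ is not actually used in the proof of the inequality itself, though of course it is what makes the lemma useful for the enumeration described in the surrounding text.
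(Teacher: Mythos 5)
Your proof is correct and is essentially identical to the paper's: both express $u_i = B^{-1}_i Bu$ and apply H\"older's inequality with the conjugate exponents $p$ and $q$. Nothing more to add.
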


\begin{proof}
We apply the triangle inequality and H\"older's inequality, and get
\[
|u_i| = |B^{-1}_i Bu| \leq \sum_{j=1}^n |B^{-1}_{i,j} (Bu)_j|
\leq \|B^{-1}_i\|_q \|Bu\|_p \leq \|B^{-1}_i\|_q \; \mu.
\qedhere
\]
\end{proof}

If $1 < p < 2$ and if $B \in \GL_3(\mathbb{R})$ satisfies the equality
conditions of Case~(III)
\[
\|Bu\|_p = 1 \text{ for all } u \in \mathcal{U}^3,
\]
then this extra check is not necessary, as the next lemma shows. This is
a consequence of Hanner's inequality \cite{Hanner1956a} 
\begin{equation}
\label{eq:Hanner}
\|x + y\|^p_p + \|x - y\|^p_p \geq (\|x\|_p + \|y\|_p)^p + \left| \|x\|_p - \|y\|_p \right|^p,
\end{equation}
which holds for all $1 < p < 2$ and all $x, y \in \mathbb{R}^n$.

\begin{lemma} 
\label{lem:Hanner}
If $1 < p < 2$ and if $B \in \GL_3(\mathbb{R})$ satisfies
$\|Bu\|_p = 1$ for all $u \in \mathcal{U}^3$, then $\|Bu\|_p \geq 1$
for all $u \in \mathbb{Z}^n \setminus \{0\}$.
\end{lemma}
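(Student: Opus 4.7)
The plan is to proceed by strong induction on $N(u) = |u_1|+|u_2|+|u_3|$, establishing $\|Bu\|_p \geq 1$ for every $u \in \mathbb{Z}^3 \setminus \{0\}$. The base case $N(u) = 1$ is immediate, since then $u = \pm e_i \in \mathcal{U}^3$ and the hypothesis gives $\|Bu\|_p = 1$.

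For the inductive step with $N(u) \geq 2$, I would halve $u$ modulo two. Define the residue $w \in \{0,1\}^3$ by $w_i \equiv u_i \pmod 2$, and set $u' = (u+w)/2$ and $u'' = (u-w)/2$; both lie in $\mathbb{Z}^3$ and satisfy $u'+u'' = u$ and $u'-u'' = w$. A short coordinatewise check — the only mildly subtle case being $u_i$ negative and odd, where $w_i=1$ has sign opposite to $u_i$ — shows $|u_i'| + |u_i''| = |u_i|$ for each $i$, and hence $N(u') + N(u'') = N(u)$. Therefore, unless $u = \pm w$, both $u'$ and $u''$ are nonzero with $N$ strictly smaller than $N(u)$, and the inductive hypothesis gives $\|Bu'\|_p, \|Bu''\|_p \geq 1$.

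Hanner's inequality \eqref{eq:Hanner} applied to $x = Bu'$ and $y = Bu''$ then yields
\[
\|Bu\|_p^p + \|Bw\|_p^p \;\geq\; \bigl(\|Bu'\|_p + \|Bu''\|_p\bigr)^p + \bigl|\|Bu'\|_p - \|Bu''\|_p\bigr|^p \;\geq\; 2^p.
\]
The point of restricting the residue to $\{0,1\}^3$ is that either $w = 0$ and $\|Bw\|_p = 0$, giving $\|Bu\|_p \geq 2$; or $w \in \{0,1\}^3 \setminus \{0\} \subset \mathcal{U}^3$ and $\|Bw\|_p = 1$ by hypothesis, giving $\|Bu\|_p^p \geq 2^p - 1 > 1$ since $p > 1$. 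The excluded case $u = \pm w$ puts $u$ directly into $\mathcal{U}^3$, so $\|Bu\|_p = 1$.

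The only real obstacle is the design of the residue: one must ensure $\pm w \in \mathcal{U}^3$ so that Hanner yields $\|Bw\|_p \leq 1$ and the induction closes. A naive sign-preserving choice such as $w_i = \mathrm{sign}(u_i)\,(u_i \bmod 2)$ would produce mixed-sign residues lying outside $\mathcal{U}^3$, whose $\|B\,\cdot\,\|_p^p$ could exceed $2^p - 1$ and swallow the Hanner gain; forcing $w_i \in \{0,1\}$, at the cost of a slightly asymmetric coordinate analysis when $u_i$ is negative and odd, resolves this.
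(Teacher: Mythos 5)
Your proof is correct and uses essentially the same engine as the paper: the halving decomposition $u = u' + u''$ with $u' - u'' \in \{0,1\}^3$ is exactly the paper's $v_i = \lceil u_i/2\rceil$, $w_i = \lfloor u_i/2\rfloor$, combined with Hanner's inequality to get $\|Bu\|_p^p \geq 2^p - 1$. The only difference is bookkeeping: you induct on $|u_1|+|u_2|+|u_3|$ rather than on $\max_i |u_i|$, which lets a single inductive step absorb the vectors with entries in $\{-1,0,1\}$ that the paper checks separately by hand, and your explicit case split on $w = 0$ versus $w \neq 0$ is actually slightly more careful than the paper's blanket claim that $\|Bv - Bw\|_p = 1$.
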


\begin{proof}
  As a start we consider $u = (1,-1,0)$. We write $u$ as the sum of
  two vectors $v, w$ in $\mathcal{U}^3$: $u = (1,0,0) +
  (0,-1,0)$. Then by Hanner's inequality~\eqref{eq:Hanner} we get
\[
\begin{split}
\|Bu\|^p_p & \; = \; \|Bv + Bw\|^p_p \\
& \; \geq \; (\|Bv\|_p + \|Bw\|_p)^p + \left| \|Bv\|_p - \|Bw\|_p \right|^p -  \|Bv - Bw\|^p_p \\
& \; \geq \; 2^p - 1 \; \geq \; 1.
\end{split}
\]
The vectors $u = (1,0,-1), (0,1,-1)$ can be treated similarly.

Now we consider $u = (-1,1,1)$. We write $u$ as $u = v + w$ with
$u = (0,1,1)$, $w = (-1,0,0)$ and apply Hanner's inequality to get
$\|Bu|^p_p \geq 2^p - 1$. Again, the vectors $u = (1,-1,1), (1,1,-1)$
can be treated similarly.

So we have $\|Bu\|_p \geq 1$ for all
$u \in \mathbb{Z}^3 \setminus \{0\}$ with $|u_i| \leq 1$ and we
proceed by induction. For the inductive step assume that
$\|Bu\|_p \geq 1$ for all $u \in \mathbb{Z}^3 \setminus \{0\}$ with
$|u_i| \leq M$. Let $u \in \mathbb{Z}^3$ with $|u_i| \leq M+1$ for all
$i$ and $|u_i| = M+1$ for some
$i$. Define $v, w \in \mathbb{Z}^3$ componentwise by
\[
v_i = \left\lceil \frac{u_i}{2} \right\rceil \; \text{ and } \; w_i =
\left\lfloor \frac{u_i}{2} \right\rfloor .
\]
Then, $|v_i|, |w_i| \leq M$. Hence, by the induction hypothesis,
$Bv\|_p \geq 1$, and $\quad \|Bw\|_p \geq 1$. Since
$v_i - w_i \in \{0,1\}$, we have $\|Bv-Bw\|_p = 1$. Hanner's
inequality~\eqref{eq:Hanner} implies the desired inequality
$\|Bu\|_p^p \geq 2^p - 1 \geq 1$.
\end{proof}

\section{Numerical findings}
\label{sec:Findings}

In order to explore dense lattice packings of superballs in three
dimensions we numerically found critical points of the nonlinear
minimization problem~\eqref{eq:nonlinear}. We randomly chose a matrix
$B \in \GL_3(\mathbb{R})$, where we chose the matrix entries randomly
according to the normal distribution $N(0,1)$ with mean~$0$ and
variance~$1$. Then we applied Newton's method to $B$ to find a
critical point in the neighborhood of $B$. By applying this procedure
to $10,000$ randomly chosen starting points, we obtained a set of
packing lattices. For the implementation we used the function
\verb|root| of the python package \verb|scipy.optimize|, see
\cite{SciPy2018a}. We determined feasible packing lattices for each of
these three cases.

\subsection{First regime}

The highest packing densities over all obtained solutions we found for
$p = 1, 1.1, 1.2, 1.3, 1.4, 1.5$ belong to Case (III). We list them in
Table~\ref{table:CaseIII} and we analyze them in detail in
Section~\ref{sec:Family}. The density we obtained for $p = 1.4$
coincides with the value reported in \cite{Ni2012a}, but the
basis $e_1, e_2, e_3$ given in~\cite[page 8829]{Ni2012a} does not
give a packing lattice since the $\ell^p_3$-norm of 
$2e_1-e_2-e_3$ is too small.

{\small
\begin{table}[htb]
\[
L_{1} = 
\begin{pmatrix}
-0.333333333333 & 0.166666666667 & 0.5\\
0.5 & -0.333333333333 & 0.166666666667\\
0.166666666667 & 0.5 & -0.333333333333\\
\end{pmatrix}
\]
\[
L_{1.1} = 
\begin{pmatrix}
-0.364125450067 & 0.193419513868 & 0.539049770666\\
0.539049770666 & -0.364125450067 & 0.193419513867\\
0.193419513867 & 0.539049770666 & -0.364125450068\\
\end{pmatrix}
\]
\[
L_{1.2} = 
\begin{pmatrix}
-0.392613644302 & 0.22381214158 & 0.569113821114\\
0.569113821115 & -0.392613644298 & 0.223812141583\\
0.223812141575 & 0.569113821114 & -0.392613644306\\
\end{pmatrix}
\]
\[
L_{1.3} = 
\begin{pmatrix}
-0.419839537546 & 0.260336714788 & 0.589023079183\\
0.589023079194 & -0.419839537534 & 0.260336714788\\
0.260336714754 & 0.589023079183 & -0.419839537578\\
\end{pmatrix}
\]
\[
L_{1.4} = 
\begin{pmatrix}
-0.446984776893 & 0.307534456657 & 0.595696355817\\
0.595696355844 & -0.446984776872 & 0.307534456649\\
0.307534456588 & 0.595696355814 & -0.446984776962\\
\end{pmatrix}
\]
\[
L_{1.5} = 
\begin{pmatrix}
-0.475292821919 & 0.375983627555 & 0.580059051165\\
0.580059051205 & -0.475292821888 & 0.375983627545\\
0.375983627482 & 0.58005905116 & -0.475292821997\\
\end{pmatrix}
\]
\caption{Matrices $L_p \in \GL_3(\mathbb{R})$ giving the densest
  lattice packing of $\mathcal{B}^p_n$ we found. They all belong to
  Case (III). }
\label{table:CaseIII}
\end{table}
}

\subsection{Second regime}

The highest packing density over all obtained solutions we found for
$p = 1.6, 1.7, 1.8, 1.9, 2.0$ belong to Case (I). We list them in
Table~\ref{table:CaseI}. We were not able to identify a pattern. The
density we obained for $p = 1.7$ coincides with the value reported in
\cite{Ni2012a}, but in \cite{Ni2012a} a corresponding
matrix is not given.  For more computational results we refer to the
thesis~\cite{Dostert2017a} of the first author.

{\small
\begin{table}[htb]
\[
L_{1.6} = 
\begin{pmatrix}
-0.000274732684343 & 0.00144215026174 & -0.999980951403\\
0.509783945989 & 0.500572697171 & -0.499408517681\\
0.509509213365 & -0.499408255119 & -0.500850623006\\
\end{pmatrix}
\]
\[
L_{1.7} = 
\begin{pmatrix}
0.458033772615 & 0.556735224273 & 0.553047497039\\
-0.530691863753 & 0.577007354869 & 0.459643833129\\
-0.0936769789857 & -0.0202829019484 & 0.988672468644\\
\end{pmatrix}
\]
\[
L_{1.8} = 
\begin{pmatrix}
-0.330208442415 & -0.696395141028 & 0.551458413193\\
0.624661955256 & -0.637870063365 & 0.316559795406\\
0.237379400053 & -0.0588146621535 & 0.954027742247\\
\end{pmatrix}
\]
\[
L_{1.9} =
\begin{pmatrix}
-0.325366212309 & -0.0828873750566 & 0.930867632285\\
0.230698700286 & 0.676231149106 & 0.66666839749\\
-0.697856599406 & 0.59207566084 & 0.335768664213\\
\end{pmatrix}
\]
\[
L_2 =
\begin{pmatrix}
0.000000000000 & 0.707106781187 & 0.707106781187\\
0.707106781623 & 0.00000000000 & 0.70710678075\\
0.707106781623 & 0.70710678075 & 0.000000000000 \\
\end{pmatrix}
\]
\caption{Matrices $B_p \in \GL_3(\mathbb{R})$ giving the densest
  lattice packing of $\mathcal{B}^p_n$ we found. They all belong to
  Case (I). $L_2$ determines the face centered cubic lattice which
  defines a densest sphere packing in three dimensions.}
\label{table:CaseI}
\end{table}
}

\section{A new family of lattice packings}
\label{sec:Family}

When looking at the numerical solutions we found in the first regime,
one immediately comes to the conclusion that the new found lattices
belong to a family of lattices which one can easily parametrize.

Consider
\[
L(x,y,z) \in \GL_3(\mathbb{R}) \quad \text{with} \quad L(x,y,z) =
\begin{pmatrix}
-x & y & z\\
z & -x & y\\
y & z & -x
\end{pmatrix},
\]
where $x, y, z$ are chosen so that
\[
\|L(x,y,z)u\|_p = 1 \text{ for all } u \in \mathcal{U}^3.
\]
This implies 
\[
\begin{split}
& \|L(x,y,z)(1,0,0)\|^p_p = |-x|^p + |y|^p + |z|^p = 1\\
& \|L(x,y,z)(1,1,0)\|^p_p = |-x+y|^p + |z-x|^p + |y + z|^p = 1\\
& \|L(x,y,z)(1,1,1)\|^p_p = 3|-x+y+z|^p = 1.
\end{split}
\]
We also have the inequalities
\[
z \geq x \geq y \geq 0
\]
Thus, $x,y,z$ has to satisfy the following nonlinear system:
\begin{equation}
\label{eq:nonlinearsystem}
\begin{split}
& z \geq x \geq y \geq 0\\
& x^p + y^p + z^p = 1 \\
& (x-y)^p + (z-x)^p + (y+z)^p = 1\\
& 3(-x +y +z)^p = 1\\
\end{split}
\end{equation}

The family of lattices starts at $p = 1$ with
$x = 1/3, y = 1/6, z = 1/2$, which defines the densest lattice packing
of regular octahedra found by Minkowski. The family ends at
$p = \log_2 3 = 1.5849625\ldots$ with $x = y = z = \frac{1}{2}$ which
defines the body centered cubic lattice.

We want to prove that the family indeed exists. When the nonlinear
system~\eqref{eq:nonlinearsystem} has a solution,
Lemma~\ref{lem:Hanner} ensures that the members of the family are
packing lattices for the corresponding superball
$\frac{1}{2}B^p_3$. We will apply the following theorem of Cohn,
Kumar, and Minton \cite[Theorem 3.1]{Cohn2016a}, which is an effective
implicit function theorem.

\begin{theorem} 
\label{thm:ExistenceTheorem}
Let $V$ and $W$ be finite-dimensional normed vector spaces over
$\mathbb{R}$, and suppose that $f: B(x_0, \epsilon) \rightarrow W$ is
a $C^1$ function, where $x_0 \in V$ and $\varepsilon > 0$. Suppose
also that $T:W \rightarrow V$ is a linear operator such that
\begin{align}
\label{eq:ExistenceTheorem}
\|Df(x) \circ T - id_W\| < 1 - \frac{\|T\| \cdot |f(x_0)|}{\varepsilon}
\end{align}
for all $x \in B(x_0, \varepsilon)$. Then there exists
$x_* \in B(x_0, \varepsilon)$ such that $f(x_*) = 0$. Moreover, in
$B(x_0, \varepsilon)$, the zero locus $f^{-1}(0)$ is a $C^1$
submanifold of dimension $\dim V - \dim W$.
\end{theorem}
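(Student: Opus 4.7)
The plan is to run a Newton-type iteration $x_{n+1} = x_n - T f(x_n)$ starting from $x_0$, viewing $T$ as an approximate right inverse to $Df$. The chief subtlety, which I expect to be the main obstacle, is that the usual Newton--Kantorovich argument treats the iteration map as a contraction on $V$ and therefore needs control of $\|\mathrm{id}_V - T \circ Df\|$, whereas we are only given control of $\|Df \circ T - \mathrm{id}_W\|$, an operator on $W$. The remedy is to measure progress of the iteration in the codomain $W$ by tracking the sequence $|f(x_n)|$, rather than in $V$ by tracking the differences $\|x_n - x_{n-1}\|$. Once this shift in viewpoint is made, the estimates are clean and the hypothesis $\|Df(x) \circ T - \mathrm{id}_W\| < 1 - \|T\|\cdot |f(x_0)|/\varepsilon$ turns out to be sharp enough both to confine the iteration to the ball and to make it contractive in $W$.

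Concretely, I would set $k := \sup_{x \in B(x_0, \varepsilon)} \|Df(x) \circ T - \mathrm{id}_W\|$, so the hypothesis reads $k < 1 - \|T\|\cdot |f(x_0)|/\varepsilon$. Assuming inductively that $x_n \in B(x_0, \varepsilon)$ and that the segment from $x_n$ to $x_{n+1}$ lies in the ball (to be verified in the next step), the fundamental theorem of calculus applied to $f$ along that segment gives
\[
f(x_{n+1}) = f(x_n) - \int_0^1 Df(\xi_t)\, T f(x_n)\, dt = -\int_0^1 \bigl(Df(\xi_t) \circ T - \mathrm{id}_W\bigr) f(x_n)\, dt,
\]
since the $f(x_n)$ terms cancel. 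Taking norms yields $|f(x_{n+1})| \leq k\, |f(x_n)|$, and hence $|f(x_n)| \leq k^n |f(x_0)|$ inductively.

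To control the iterates themselves I would use $\|x_{n+1} - x_n\| \leq \|T\|\cdot |f(x_n)| \leq \|T\|\, k^n |f(x_0)|$, so telescoping gives
\[
\|x_n - x_0\| \leq \|T\| \sum_{j=0}^{n-1} k^j |f(x_0)| \leq \frac{\|T\|\cdot |f(x_0)|}{1-k} < \varepsilon,
\]
the last strict inequality being precisely the theorem's hypothesis rearranged. Thus every iterate stays in $B(x_0, \varepsilon)$ (and every segment between consecutive iterates as well, by convexity), the contractive estimate on $|f|$ propagates at every step, and $\{x_n\}$ is Cauchy in $V$. Its limit $x_* \in B(x_0, \varepsilon)$ satisfies $f(x_*) = 0$ by continuity.

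For the submanifold claim I would invoke the classical implicit function theorem at each $y \in f^{-1}(0) \cap B(x_0, \varepsilon)$. The hypothesis $\|Df(y) \circ T - \mathrm{id}_W\| < 1$ forces $Df(y) \circ T : W \to W$ to be invertible via a Neumann series, and in particular $Df(y) : V \to W$ is surjective. The usual implicit function theorem then produces a $C^1$ local parametrisation of $f^{-1}(0)$ by a neighbourhood in $\ker Df(y)$, so $f^{-1}(0) \cap B(x_0, \varepsilon)$ is a $C^1$ submanifold of dimension $\dim V - \dim W$.
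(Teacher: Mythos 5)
This theorem is not proved in the paper at all: it is quoted verbatim as Theorem~3.1 of the cited reference of Cohn, Kumar, and Minton and used as a black box, so there is no in-paper argument to compare against. Your quasi-Newton iteration $x_{n+1}=x_n-Tf(x_n)$, with progress measured by the geometric decay of $|f(x_n)|$ in $W$ rather than by a contraction on $V$, is exactly the intended (and the source's) line of proof, and the submanifold part via invertibility of $Df(y)\circ T$ from the Neumann series, surjectivity of $Df(y)$, and the classical implicit function theorem is also correct.

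One point needs slightly more care than you give it. You set $k:=\sup_{x\in B(x_0,\varepsilon)}\|Df(x)\circ T-\mathrm{id}_W\|$ and assert that $\|T\|\cdot|f(x_0)|/(1-k)<\varepsilon$ is ``precisely the theorem's hypothesis rearranged.'' It is not quite: the hypothesis is a strict pointwise bound on an \emph{open} ball, so the supremum $k$ may equal $1-\|T\|\cdot|f(x_0)|/\varepsilon$, in which case your telescoped bound only yields $\|x_n-x_0\|\le\varepsilon$ in the limit, and the limit point could a priori sit on the boundary, where $f$ is not even defined. This is easily repaired — for instance, each step satisfies $|f(x_{n+1})|\le c_n|f(x_n)|$ with $c_n$ the maximum of the continuous function $t\mapsto\|Df(x_n+t(x_{n+1}-x_n))\circ T-\mathrm{id}_W\|$ over the compact segment, which is strictly below the hypothesized bound, and already the strictness at the first step gives $\sum_n\|x_{n+1}-x_n\|<\varepsilon$; alternatively one can run the argument on a slightly shrunken ball. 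But as written the confinement of $x_*$ to the open ball is not fully justified. Everything else is sound.
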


In the theorem, $\|\cdot\|$ denotes the operator norm, $Df(x)$ is the
Jacobian of $f$ at $x$, and $id_W$ is the identity operator on $W$,
and $B(x_0, \varepsilon) \subseteq V$ is the \textsl{open} ball with
center $x_0$ and radius $\varepsilon$, where the distance is measured
using the norm of $V$.

\begin{theorem}
\label{thm:Existence}
The nonlinear system~\eqref{eq:nonlinearsystem} has a unique solution
$(x_*(p), y_*(p), z_*(p))$ for all $p \in [1,1.58]$. In particular,
the matrix $L(x_*(p),y_*(p),z_*(p))$ defines a packing lattice for
$\frac{1}{2} \mathcal{B}^p_3$.
\end{theorem}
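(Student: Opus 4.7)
The plan is to apply Theorem \ref{thm:ExistenceTheorem} to a map of four variables $(x,y,z,p)$ into $\mathbb{R}^3$, so that the zero locus becomes a one-dimensional $C^1$ manifold traced out by the family, and to verify the hypothesis of that theorem by rigorous interval arithmetic over a finite covering of $[1,1.58]$. Concretely, set $V = \mathbb{R}^4$, $W = \mathbb{R}^3$, and
\[
F(x,y,z,p) = \bigl( x^p + y^p + z^p - 1,\; (x-y)^p + (z-x)^p + (y+z)^p - 1,\; 3(-x+y+z)^p - 1 \bigr).
\]
At $p = 1$ the sign pattern $z > x > y > 0$ removes all absolute values and turns the system into a linear one with the unique solution $(1/3,1/6,1/2)$ found by Minkowski. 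Along the expected curve all arguments inside each $p$-th power remain strictly positive, so $F$ is smooth and its partial Jacobian $D_{(x,y,z)} F$ is given explicitly in terms of $(p-1)$-th powers of positive real numbers.

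I would partition $[1,1.58]$ into finitely many short sub-intervals $[p_i, p_{i+1}]$, and for each one run Newton's method numerically to produce a high-precision reference point $X_i = (\tilde{x}_i, \tilde{y}_i, \tilde{z}_i, \bar{p}_i)$ near the curve. I would then compute a numerical inverse $T_0^{(i)}$ of the $3 \times 3$ matrix $D_{(x,y,z)} F(X_i)$ and define the linear map $T_i : \mathbb{R}^3 \to \mathbb{R}^4$ by $T_i w = (T_0^{(i)} w, 0)$, so that $DF(X_i) \circ T_i$ is approximately the identity on $W$. Using interval arithmetic I would compute rigorous upper bounds on the residual $|F(X_i)|$ and on the operator norm $\|DF(x,y,z,p) \circ T_i - \mathrm{id}_W\|$ uniformly over the 4-ball $B_i = B(X_i, \varepsilon_i)$, tuning $\varepsilon_i$ and the mesh so that the inequality \eqref{eq:ExistenceTheorem} holds strictly on each $B_i$. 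Theorem \ref{thm:ExistenceTheorem} then yields a one-dimensional $C^1$ zero-locus piece inside each $B_i$, which, thanks to the factored choice of $T_i$ through the $(x,y,z)$-block, is automatically a graph over the $p$-axis. Arranging the $B_i$ so that their projections to the $p$-axis cover $[1,1.58]$ with overlaps splices these graphs into a single continuous curve $p \mapsto (x_*(p), y_*(p), z_*(p))$; uniqueness along the branch is immediate from the graph property, and the branch is pinned down by its endpoint at Minkowski's solution. Lemma \ref{lem:Hanner} finally upgrades each $L(x_*(p), y_*(p), z_*(p))$ to a packing lattice for $\frac{1}{2}\mathcal{B}^p_3$.

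The main obstacle is arranging the interval arithmetic verification uniformly in $p$, especially near the right endpoint. At $p = \log_2 3$ the three coordinates coincide at $1/2$, so $(x-y)^p$ and $(z-x)^p$ collapse to zero together with their first derivatives and $D_{(x,y,z)} F$ degenerates. As $p_i$ approaches $1.58$ the operator norm $\|T_i\|$ therefore grows and the admissible $\varepsilon_i$ shrinks, forcing a progressively finer sub-interval mesh near the endpoint; this is presumably why the theorem stops at $1.58$ rather than at $\log_2 3$ itself. Quantifying these bounds sharply enough to make \eqref{eq:ExistenceTheorem} machine-verifiable --- in particular producing tight interval enclosures of $t^p$ and of $p\, t^{p-1}$ over the joint ranges of $t$ and $p$ that actually occur in each $B_i$ --- is where the bulk of the computational work lies.
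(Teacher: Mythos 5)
Your overall strategy---the Cohn--Kumar--Minton effective implicit function theorem, a numerical reference point with an approximate inverse Jacobian on each of finitely many subintervals of $[1,1.58]$, and interval arithmetic to certify \eqref{eq:ExistenceTheorem}---is exactly the paper's. The difference is that you apply the theorem to the four-variable map $F(x,y,z,p)$ with $V=\mathbb{R}^4$, $W=\mathbb{R}^3$, and then try to extract the family from the resulting one-dimensional zero locus. That is where there is a genuine gap: Theorem~\ref{thm:ExistenceTheorem} only guarantees that the zero locus inside the four-dimensional ball $B_i$ is nonempty and is a $C^1$ curve; it does not guarantee that this curve projects \emph{onto} the whole $p$-range of $B_i$, nor that each $p$-slice of it is a single point. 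Invertibility of $D_{(x,y,z)}F$ throughout $B_i$ (which your bound does supply) makes each component of the curve locally a graph over $p$, but a component could leave $B_i$ through the $(x,y,z)$-faces before reaching the ends of the $p$-interval, and there could a priori be several disjoint graph pieces. So ``automatically a graph over the $p$-axis'' and ``uniqueness is immediate'' are not justified as stated, and the splicing across overlapping $B_i$ could in principle fail at a value of $p$ that no curve piece reaches.

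The paper avoids this by keeping $V=W=\mathbb{R}^3$: it treats $p$ as a parameter, fixes $T=Df_{p_0}(x_0,y_0,z_0)^{-1}$, and verifies \eqref{eq:ExistenceTheorem} by interval arithmetic simultaneously for all $p$ in the subinterval and all $(x,y,z)$ in the $3$-ball---including the residual term $|f_p(x_0,y_0,z_0)|$, which must be bounded over the entire $p$-subinterval rather than only at the single reference value $\bar p_i$ as in your description. For each individual $p$ the theorem then yields existence \emph{and} uniqueness in the ball directly, with $\dim V-\dim W=0$. Since with your block choice of $T_i$ one has $DF\circ T_i=D_{(x,y,z)}F\cdot T_0^{(i)}$, your certified inequality is essentially the same interval computation; the fix is purely logical: read it slice-wise in $p$ instead of passing through the manifold conclusion. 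Your diagnosis of the obstruction at $p=\log_2 3$ (degeneration of the Jacobian as $x,y,z\to\tfrac12$ and the collapse of the region $z\ge x\ge y$) matches the paper's, and the final appeal to Lemma~\ref{lem:Hanner} is the same.
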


\begin{proof}
Our proof is computer assisted. We use interval arithmetic as implemented in 
the free open source mathematics software system SageMath~\cite{SageMath}.

Define the function $f_p : B((x_0, y_0, z_0), \varepsilon) \to \mathbb{R}^3$ by
\[
f_p(x,y,z) =
\begin{pmatrix}
x^p + y^p + z^p - 1\\
(x-y)^p + (z-x)^p + (y+z)^p - 1\\
3(-x + y + z)^p - 1
\end{pmatrix}.
\]
The Jacobian $Df_p(x,y,z)$ of $f_p$ equals
\[
p
\begin{pmatrix}
x^{p-1} & y^{p-1} & z^{p-1}\\
(x-y)^{p-1}-(z-x)^{p-1} & -(x-y)^{p-1}+(y+z)^{p-1} & (z-x)^{p-1}+(y+z)^{p-1}\\
-3(-x+y+z)^{p-1} & 3(-x+y+z)^{p-1} & 3(-x+y+z)^{p-1}
\end{pmatrix}.
\]
As norms we choose the $\ell^\infty_3$ norm for the domain, as well as
for the codomain. Then the $\ell^\infty_3$ operator norm is the maximum
of the $\ell^1_3$ norms of the rows of the considered matrix.

Now we subdivide the interval $[1.1.58]$ in smaller subintervals and
for every subinterval we choose an $\varepsilon > 0$ and a starting
point $(x_0,y_0,z_0)$ which we found by the numerical solution of the
nonlinear system~\eqref{eq:nonlinearsystem}. 

Our choice of $\varepsilon > 0$ and using the $\ell^{\infty}_3$ norm
ensures that the ball $B((x_0,y_0,z_0), \varepsilon)$ lies in the
region $z \geq x \geq y \geq 0$.  We set $T = Df_p(x_0,y_0,z_0)^{-1}$
and verify that inequality~\eqref{eq:ExistenceTheorem} is satisfied
for all $p$ in the subinterval and for all
$(x,y,z) \in B((x_0,y_0,z_0), \varepsilon)$. Then, the assumptions of
Theorem~\ref{thm:ExistenceTheorem} are satisfied and we can conclude
that the nonlinear system~\eqref{eq:nonlinearsystem} has a unique
solution $(x_*(p), y_*(p), z_*(p))$ for all $p$ in the
subinterval. The verification of~\eqref{eq:ExistenceTheorem} uses
interval arithmetic. Our Sage function \texttt{verify} is only a few
lines long and can be found in Appendix~\ref{sec:a} of this paper.

For example, we choose the subinterval $[1,1.01]$, the starting point
$(x_0,y_0,z_0) = (\frac{1}{3}, \frac{1}{6}, \frac{1}{2})$, and
$\varepsilon = 0.03$. For $T$ we choose
\[
T = 
\begin{pmatrix}
\frac{1}{2} & 0 & -\frac{1}{6}\\
\frac{1}{2} & -\frac{1}{2} & \frac{1}{6}\\
 0 & \frac{1}{2} & 0\\
\end{pmatrix}.
\]
Then, Sage computes
\[
\|Df_p(x,y,z)  T - I_3\| \in [0.00000000000000000,0.035585437892437462]
\]
and
\[
1 - \frac{\|T\| \cdot |f_p(x_0,y_0,z_0)|}{\varepsilon} \in [0.60895579575438163,1.0000000000000000]
\]
for all $(x,y,z) \in B((x_0,y_0,z_0), \varepsilon)$ and for all
$p \in [1,1.01]$. This shows that~\eqref{eq:nonlinearsystem} has a
unique solution for each $p \in [1,1.01]$.

More examples of our choises can be found in Appendix~\ref{sec:b}; all
choices can be found as ancillary file from the \texttt{arXiv.org} e-print archive.
\end{proof}

We conjecture that the family of lattices also exists for
$p \in (1.58, \log_2 3 = 1.5849625007\ldots]$. We could enlarge the interval for which
Theorem~\ref{thm:Existence} holds by increasing the precision and by
using smaller subintervals. For example, running the following Sage
code shows that the lattice exists for
$p \in [1.5849625, 1.5849625+10^{-10}]$.
\begin{verbatim}
verify(1.5849625, 0.499999999842, 0.499999124646, 0.500000875038,
0.0000002, 0.0000000001)
\end{verbatim}
However, when $p$ approaches $\log_2 3$, then the Jacobian approaches
\[
Df_{\log_2 3}\left(\frac{1}{2},\frac{1}{2},\frac{1}{2}\right) =
\log_3 2
\begin{pmatrix}
\frac{2}{3} & \frac{2}{3} & \frac{2}{3}\\
0 & 1 & 1\\
-2 & 2 & 2\\
\end{pmatrix}
\]
which is singular. Furthermore, there is no $\varepsilon > 0$ so that
the ball $B((\frac{1}{2},\frac{1}{2},\frac{1}{2}), \varepsilon)$ is
contained in the region $z \geq x \geq y$. Currently, we do not know
how to modify the approach to be able to handle these two
difficulties.

\section{Conjectures and open problems}
\label{sec:Conjectures}

Based on Section~\ref{sec:Findings} we pose the
following conjectures and open problems:

\begin{enumerate}
\item The family of lattices determined by~\eqref{eq:nonlinearsystem}
  exists for all $p \in (1.58, \log_2 3]$.
\item The lattices we found in the first regime, and the ones found by
  Jiao, Stillinger, and Torquato \cite{Jiao2009a} in the third and
  fourth regime give the densest lattice packings of superballs for
  the corresponding $p$.
\item It would be interesting to develop a better understanding of the
  densest known lattices in the second regime.
\item Is there a value of $p \neq \infty$ for which the upper bound
  for translative packings of superballs determined
  in~\cite{Dostert2017b} matches the corresponding lower bound?
\item For $p > \log_2 3$ there are no lattices which fall into Case
  (II) or into Case (III).
\item What is the largest value of $p \leq 2$ so that the kissing
  number of $\mathcal{B}^p_3$ superballs is strictly larger than $12$?
\end{enumerate}

\section*{Acknowledgements}

We thank Crist\'obal Guzm\'an and Philippe Moustrou for helpful
discussions.

This material is based upon work supported by the
National Science Foundation under Grant No. DMS-1439786 while the
first author was in residence at the Institute for Computational and
Experimental Research in Mathematics in Providence, RI, during the
"Point Configurations in Geometry, Physics and Computer Science"
semester program. The second author was partially supported by the
SFB/TRR 191 ``Symplectic Structures in Geometry, Algebra and
Dynamics'', funded by the DFG. This project has received funding from
the European Union's Horizon 2020 research and innovation programme
under the Marie Sk\l{}odowska-Curie agreement number~764759.

\begin{appendix}

\section{Source code for proof of Theorem~\ref{thm:Existence}}
\label{sec:a}

We used the following program written in Sage in the proof of Theorem~\ref{thm:Existence}.

{\small
\begin{verbatim}
def f(p,x,y,z):
  return vector([x^p+y^p+z^p-1, (x-y)^p+(z-x)^p+(y+z)^p-1, 3*(-x+y+z)^p-1])

def Df(p,x,y,z):
  pm = p-1
  return p*Matrix([[x^pm, y^pm, z^pm],
                   [(x-y)^pm-(z-x)^pm, -(x-y)^pm+(y+z)^pm, (z-x)^pm+(y+z)^pm],
                   [-3*(-x+y+z)^pm, 3*(-x+y+z)^pm, 3*(-x+y+z)^pm]])

def linfinitynorm(A):
  return max([A.row(0).norm(1),A.row(1).norm(1),A.row(2).norm(1)])

def verify(p0,x0,y0,z0,eps,peps):
  p = RIF(p0,p0+peps)
  x = RIF(x0-eps,x0+eps)
  y = RIF(y0-eps,y0+eps)
  z = RIF(z0-eps,z0+eps)
  T = Df(p0,x0,y0,z0)^(-1)
  A = Df(p,x,y,z)*T - identity_matrix(3)
  lhs = linfinitynorm(A)
  rhs = 1 - linfinitynorm(T)*f(p,x0,y0,z0).norm(infinity)/eps
  print(lhs.str(style='brackets')+'<'+rhs.str(style='brackets')+':'+str(lhs < rhs))
\end{verbatim}
}

\section{Examples of choices made in the proof of Theorem~\ref{thm:Existence}}

\label{sec:b}

{\small
\begin{verbatim}
verify(1.0, 0.333333333333, 0.166666666667, 0.5, 0.03, 0.01)
verify(1.01, 0.336543320255, 0.169227330456, 0.504294897412, 0.03, 0.01)
verify(1.02, 0.339721855623, 0.171809715243, 0.508503843298, 0.03, 0.01)
\end{verbatim}

\smallskip

\begin{center}
$\vdots$
\end{center}

\smallskip

\begin{verbatim}
verify(1.5, 0.475292821919, 0.375983627555, 0.580059051165, 0.03, 0.01)
verify(1.51, 0.47822053429, 0.384961182567, 0.576346694842, 0.03, 0.01)
verify(1.52, 0.481163698665, 0.394556223383, 0.572012690078, 0.006, 0.001)
verify(1.521, 0.48145875646, 0.395553814361, 0.571540873724, 0.006, 0.001)
verify(1.522, 0.481753934423, 0.396558835694, 0.571061553436, 0.006, 0.001)
verify(1.523, 0.482049228267, 0.39757142775, 0.570574584849, 0.006, 0.001)
\end{verbatim}

\smallskip

\begin{center}
$\vdots$
\end{center}

\smallskip

\begin{verbatim}
verify(1.577, 0.497880292399, 0.472696125604, 0.523437325276, 0.006, 0.001)
verify(1.578, 0.498157887988, 0.475000219764, 0.521630841401, 0.006, 0.001)
verify(1.579, 0.498433446144, 0.477421354522, 0.519705097786, 0.006, 0.001)
\end{verbatim}
}

\end{appendix}


\begin{thebibliography}{99}

\bibitem{Betke2000a}
U.~Betke and M.~Henk,
{\em Densest lattice packings of $3$-polytopes},
Comput. Geom. \textbf{16} (2000) 157--186.
(\url{http://arxiv.org/abs/math/9909172})

\bibitem{Cohn2016a}
H.~Cohn, A.~Kumar, and G.~Minton,
{\em Optimal simplices and codes in projective spaces},
Geom.~Topol. \textbf{20} (2016), 1289--1357.
(\url{https://arxiv.org/abs/1308.3188})

\bibitem{Dieter1975a}
U.~Dieter,
{\em How to calculate shortest vectors in a lattice}, 
Math. Comp. \textbf{29} (1975), 827--833.
(\url{https://www.jstor.org/stable/2005293})

\bibitem{Dostert2017a}
M.~Dostert, 
{\em Geometric Packings of Non-Spherical Shapes}, 
PhD Thesis, University of Cologne, 2017.
(\url{http://kups.ub.uni-koeln.de/7706/})

\bibitem{Dostert2017b}
M.~Dostert, C. ~Guzm\'an, F.M.~de Oliveira Filho, and F.~Vallentin,
{\em New upper bounds for the density of translative packings of three-dimensional convex bodies with tetrahedral symmetry},
Discrete Comput. Geom. \textbf{58} (2017), 449--481.
(\url{https://arxiv.org/abs/1510.02331})

\bibitem{Gruber2007a}
P.M.~Gruber,
{\em Convex and Discrete Geometry}, 
Springer, Berlin, 2007.

\bibitem{Gruber1987a}
P.M.~Gruber and C.G.~Lekkerkerker,
{\em Geometry of Numbers}, 
North-Holland, Amsterdam, 1987.

\bibitem{Hanner1956a}
O.~Hanner,
{\em On the uniform convexity of $L^p$ and $l^p$},
Ark. Mat. \textbf{3} (1956), 239--244.
(\url{https://doi.org/10.1007/BF02589410}).

\bibitem{Haus1999a}
U.-U.~Haus, 
{\em Gitterpackungen konvexer Körper}, 
Diploma Thesis, Technical University of Berlin, 1999.
(\url{https://www.zib.de/groetschel/students/diplomhaus.ps})

\bibitem{SciPy2018a}
E.~Jones, E.~Oliphant,  P.~Peterson, et al.,
{\em SciPy: Open Source Scientific Tools for Python}, 2001--.
\url{http://www.scipy.org/}

\bibitem{Jiao2009a}
Y.~Jiao, F.H.~Stillinger, and S.~Torquato, 
{\em Optimal packings of superballs},
Phys. Rev. E \textbf{79} (2009), 041309, 12 pp.
(\url{http://arxiv.org/abs/0902.1504})

\bibitem{Jiao2011a}
Y.~Jiao, F.H.~Stillinger, and S.~Torquato, 
{\em Erratum: Optimal packings of superballs [Phys. Rev. E 79, 041309 (2009)]},
Phys. Rev. E \textbf{84} (2011), 069902.

\bibitem{Minkowski1904a}
H.~Minkowski,
{\em Dichteste gitterf\"ormige Lagerung kongruenter K\"orper}, 
Nachr. Akad. Wiss. G\"ottingen Math.-Phys. Kl. II (1904) 311--355.
(\url{http://www.digizeitschriften.de/dms/resolveppn/?PID=GDZPPN002500132})

\bibitem{Ni2012a}
R.~Ni, A.P.~Gantapara, J.~de Graaf, R.~van Roij, and M.~Dijkstra,
{\em Phase diagram of colloidal hard superballs: from cubes via spheres to
octahedra},
Soft Matter \textbf{8} (2012), 8826--8834.
(\url{http://arxiv.org/abs/1111.4357})

\bibitem{SageMath}
The Sage Development Team,
{\em SageMath, the Sage Mathematics Software System (Version 8.2)},
\url{http://www.sagemath.org}, 2018.
\end{thebibliography}
\end{document}